\newtheorem{thm}{Theorem}
\newtheorem{lem}[thm]{Lemma}
\newtheorem{cor}[thm]{Corollary}
\newtheorem{defn}[thm]{Definition}
\newtheorem{clm}[thm]{Claim}
\newtheorem{cons}[thm]{Construction}
\newtheorem{prop}[thm]{Proposition}
\newtheorem{conj}[thm]{Conjecture}
\newtheorem{obs}[thm]{Observation}
\newenvironment{theorem}{\begin{thm}\begin{rm}}%
{\end{rm}\end{thm}}
\newenvironment{lemma}{\begin{lem}\begin{rm}}%
{\end{rm}\end{lem}}
\newenvironment{corollary}{\begin{cor}\begin{rm}}%
{\end{rm}\end{cor}}
\newenvironment{definition}{\begin{defn}\begin{em}}%
{\end{em}\end{defn}}
{\end{rm}\end{clm}}
{\end{em}\end{cons}}
{\end{em}\end{prop}}
{\end{rm}\end{conj}}
\newenvironment{observation}{\begin{obs}\begin{rm}}%
{\end{rm}\end{obs}}
\newcommand{\secref}[1]{\hyperref[#1]{Section \ref{#1}}}
\newcommand{\thref}[1]{\hyperref[#1]{Theorem \ref{#1}}}
\newcommand{\defref}[1]{\hyperref[#1]{Definition \ref{#1}}}
\newcommand{\cororef}[1]{\hyperref[#1]{Corollary \ref{#1}}}
\newcommand{\propref}[1]{\hyperref[#1]{Proposition \ref{#1}}}
\newcommand{\remref}[1]{\hyperref[#1]{Remark \ref{#1}}}
\newcommand{\lemref}[1]{\hyperref[#1]{Lemma \ref{#1}}}
\newcommand{\clref}[1]{\hyperref[#1]{Claim \ref{#1}}}
\newcommand{\consref}[1]{\hyperref[#1]{Construction \ref{#1}}}
\newcommand{\figref}[1]{\hyperref[#1]{Figure \ref{#1}}}
\newcommand{\eqnref}[1]{\hyperref[#1]{Equation \ref{#1}}}
\newcommand{\subroutineref}[1]{\hyperref[#1]{Subroutine \ref{#1}}}
\newcommand{\apref}[1]{\hyperref[#1]{Appendix \ref{#1}}}
\newcommand{\conjref}[1]{\hyperref[#1]{Conjecture  \ref{#1}}}
\newcommand{\obsref}[1]{\hyperref[#1]{Observation \ref{#1}}}
\newcommand{\abs}[1]{\left| #1 \right|} 
\newcommand{\set}[1]{\left\{ #1 \right\}}
\def\krel{%
    \ensuremath{%
        \mathrel{%
            \begin{tikzpicture}[baseline=-0.8ex]
                \draw [line width=0.06ex, line join=round, stealth'-stealth']
                    (1.5ex,0.67ex) -- (0, 0) -- (1.5ex, -0.67ex);
                \draw [line width=0.06ex, line cap=round]
                    (0.03ex, -1ex) -- (1.47ex, -1ex);
            \end{tikzpicture}%
        }%
    }%
}
\title{Forbidden Directed Minors and Kelly-width}
\author{
Shiva Kintali \footnote{Department of Computer Science, Princeton University, Princeton, NJ 08540. Email : {\em{kintali@cs.princeton.edu}}}\ \ \ \ \ \ 
Qiuyi Zhang \footnote{Mathematics Department, Princeton University, Princeton, NJ 08540. Email : {\em{qiuyiz@princeton.edu}}}
}
\begin{document}
\maketitle

\begin{abstract}

Partial 1-trees are undirected graphs of treewidth at most one. Similarly, partial 1-DAGs are directed graphs of KellyWidth at most two. It is well-known that an undirected graph is a partial 1-tree if and only if it has no $K_3$ minor. In this paper, we generalize this characterization to partial 1-DAGs. We show that partial 1-DAGs are characterized by three forbidden directed minors, $K_3, N_4$ and $M_5$. \\

\noindent {\bf{Keywords}}: forbidden minors, graph minors, Kelly-width, partial $k$-DAGs, treewidth.
\end{abstract}

\section{Introduction}
Treewidth and pathwidth (and their associated decompositions) played a crucial role in the development of graph minor theory (\cite{graph-minors-1}$\cdots$\cite{graph-minors-23}) and proved to be algorithmically and structurally important graph parameters. Treewidth (resp. pathwidth) measures how similar a graph is to a tree (resp. path). Treewidth has several equivalent characterizations in terms of elimination orderings, elimination trees (used in symmetric matrix factorization), partial $k$-trees and cops and (visible and eager) robber games. Several problems that are NP-hard on general graphs are solvable in polynomial time (some even in linear time, some are fixed-parameter tractable) on graphs of bounded treewidth by using dynamic programming on a tree-decomposition of the input graph. Similarly pathwidth has several equivalent characterizations in terms of vertex separation number, node searching number, interval thickness (i.e., one less than the maximum clique size in an interval supergraph) and cops and (invisible and eager) robber games. Several problems that are NP-hard on general graphs are efficiently solvable on graphs of bounded pathwidth.

Motivated by the success of treewidth and pathwidth, efforts have been made to generalize these concepts to digraphs. Directed treewidth \cite{dtw-definition}, D-width \cite{dwidth-definition}, DAG-width \cite{dagw-definition1, dagw-definition2, dagw-journal} and Kelly-width \cite{kellywidth-definition} are some such notions which generalize treewidth, whereas directed pathwidth \cite{dpw-definition} generalizes pathwidth. All these parameters have associated decompositions called arboreal decompositions, D-decompositions, DAG-decompositions, Kelly-decompositions and directed path decompositions respectively. Hamitonian cycle, Hamiltonian path, $k$-disjoint paths and weighted disjoint paths are solvable in polynomial time on digraphs of bounded directed treewidth. In addition to these problems, parity games are solvable in polynomial time on digraphs of bounded DAG-width. Directed treewidth and DAG-width started an interesting line of research but they suffer from some disadvantages. Directed treewidth is not monotone under butterfly minors (a natural set of directed minor operations) and they do not have an {\em exact} characterization in terms of cops and robber games. DAG-width is an improvement over these parameters with an exact characterization of cops and (visible and eager) robber games. Unfortunately it is not known whether there exists a {\em linear size} DAG-decomposition for digraphs of bounded DAG-width. The best known upper bound on the size of DAG-decompositions of digraphs of DAG-width $\leq k$ is $O(n^k)$.

Currently, Kelly-width is the best known generalization of treewidth. Hunter and Kreutzer \cite{kellywidth-definition} showed that Kelly-width not only generalizes treewidth but is also characterized by several equivalent notions such as directed elimination orderings, elimination DAGs \cite{elimination-DAGs} (used in asymmetric matrix factorization), partial $k$-DAGs and cops and (invisible and inert) robber games. Hamitonian cycle, Hamiltonian path, $k$-disjoint paths, weighted disjoint paths and parity games are solvable in polynomial time on digraphs of bounded Kelly-width. The size of Kelly-decompositions can be made linear and their structure is suitable for dynamic-programming-type algorithms (see \cite{kellywidth-definition} for more details).

The graph minor theorem \cite{graph-minors-20} (i.e., undirected graphs are well-quasi-ordered under the minor relation) implies that every minor-closed family of undirected graphs has a finite set of minimal forbidden minors. In particular, it implies that for all $k \geq 0$, graphs of treewidth (or pathwidth) $\leq k$ are characterized by a finite set of forbidden minors. The complete sets of forbidden minors are known for small values of treewidth and pathwidth. A graph has treewidth at most one (resp. two) if and only if it is $K_3$-free (resp. $K_4$-free). Graphs of treewidth at most three are characterized by four forbidden minors ($K_5$, the graph of the octahedron, the graph of the pentagonal prism, and the Wagner graph) \cite{forbidden-minors-partial-3-trees, forbidden-minors-partial-3-trees-SatyaTung}. Graphs of pathwidth at most one (resp. two) are characterized by two (resp. 110) forbidden minors \cite{2-minors-pathwidth,110-pathwidth}.

A natural question is ``{\em are partial $k$-DAGs (i.e., digraphs of bounded Kelly-width) characterized by a finite set of forbidden directed minors} ?". Unfortunately there is no generalization of the graph minor theorem for digraphs yet. Existing notions of directed minors (eg. directed topological minors \cite{hunter-thesis}, butterfly minors \cite{dtw-definition}, strong contractions \cite{tournament-wqo}, directed immersions \cite{tournament-immersion-wqo}) do not imply well-quasi-ordering of all digraphs (see \cite{kintali-minors1} for more details).

Recently, the first author \cite{kintali-minors1} introduced a notion of {\em{directed minors}} based on several operations of contracting special subset of directed edges, conjectured that digraphs are well-quasi-ordered under the proposed {\em{directed minor relation}} and proved the conjecture for some special classes of digraphs. This conjecture implies that every family of digraphs closed under the directed minor operations (see \secref{sec:dir-minors}) are characterized by a finite set of minimal {\em forbidden directed minors}. In particular, it implies that for all $k \geq 0$, digraphs of Kelly-width (or DAG-width, or directed pathwidth) $\leq k$ are characterized by a finite set of forbidden directed minors (see \cite{kintali-minors1} for more details).

The current paper is motivated by the question ``{\em what are the complete sets of directed forbidden minors for digraphs with small values of Kelly-width} ?". We exhibit the sets of {\em forbidden directed minors} for digraphs with Kelly-width one and two (i.e., partial 0-DAGs and partial $1$-DAGs). We prove that partial 0-DAGs are characterized by one forbidden directed minor (see \lemref{lem:charac-dags}) and partial 1-DAGs are characterized by three forbidden directed minors (see \figref{fig:kw-minors}).

\subsection{Notation and Preliminaries}

We use standard graph theory notation and terminology (see \cite{diestel-txt}). For a directed graph (digraph) $G$, we write $V(G)$ for its vertex set (or node set) and $E(G)$ for its edge set. All digraphs in this paper are finite and simple (i.e., no self loops and no multiple edges) unless otherwise stated. For an edge $e = (u, v)$, we say that $e$ is an edge {\em from $u$ to $v$}. We say that $u$ is the {\em tail} of $e$ and $v$ is the {\em head} of $e$. We also say that $u$ is an in-neighbor of $v$ and $v$ is an out-neighbor of $u$. The out-neighbors of a vertex $u$ is given by $N_{out}(u) = \set{v : (u, v) \in E}$ and the in-neighbors of a vertex $v$ is given by $N_{in}(v) = \set{u: (u, v) \in E}$. Let $d_{out}(u) = |N_{out}(u)|$ (resp. $d_{in}(v) = |N_{in}(v)|$) denote the out-degree of $u$ (resp. in-degree of $v$).

For $S \subseteq V(G)$ we write $G[S]$ for the subgraph induced by $S$, and $G \setminus S$ for the subgraph induced by $V(G) \setminus S$. For $F \subseteq E(G)$, we write $G[F]$ for the subgraph with vertex set equal to the set of endpoints of $F$, and edge set equal to $F$.

For a digraph $G$, let $\overline{G}$ be the undirected graph, where $V(\overline{G}) = V(G)$ and $E(\overline{G}) = \{\{u,v\} : (u,v) \in E(G)\}$. We say that $\overline{G}$ is the {\em underlying} undirected graph of $G$. For an {\em{undirected}} graph $G$, let $\overset\leftrightarrow{G}$ be the {\it digraph} obtained by replacing each edge $\{u,v\}$ of $G$ by two directed edges $(u,v)$ and $(v,u)$. We say that $\overset\leftrightarrow{G}$ is the {\em bidirected} graph of $G$.

A {\em directed (simple) path} in $G$ is a sequence of vertices $v_1,v_2,\dots,v_l$ such that for all $1 \leq i \leq l-1$, $(v_i,v_{i+1}) \in E(G)$. For a subset $X \subseteq V(G)$, the set of vertices reachable from $X$ is defined as: $Reach_G{(X)} := \{v \in V (G) : \mbox{there is a directed path to $v$ from some}\ u \in X \}$. We say that $G$ is {\em weakly connected} if $\overline{G}$ is connected. We say that $G$ is {\em strongly connected} if, for every pair of vertices $u,v \in V(G)$, there is a directed path from $u$ to $v$ and a directed path from $v$ to $u$.

We use the term DAG when referring to directed acyclic graphs. Let $T$ be a DAG. For two distinct nodes $i$ and $j$ of $T$, we write $i \prec_T j$ if there is a directed walk in $T$ with first node $i$ and last node $j$. For convenience, we write $i \prec j$ whenever $T$ is clear from the context. For nodes $i$ and $j$ of $T$, we write $i \preceq j$ if either $i=j$ or $i \prec j$. For an edge $e=(i,j)$ and a node $k$ of $T$, we write $e \prec k$ if either $j=k$ or $j \prec k$. We write $e \sim i$ (resp. $e \sim j$) to mean that $e$ is incident with $i$ (resp. $j$).

Let $\mathcal{W}=(W_i)_{i \in V(T)}$ be a family of finite sets called {\it node bags}, which associates each node $i$ of $T$ to a node bag $W_i$. We write $W_{\succeq i}$ to denote $\displaystyle\bigcup_{j \succeq i} W_j$. Kelly-width (see \secref{sec:kw}) is based on the following notion of {\it guarding}:

\begin{definition}[{\sc Guarding}]\label{definition:guarding}
Let $G$ be a digraph. Let $W, X \subseteq V(G)$. We say $X$ {\it guards} $W$ if $W \cap X = \emptyset$, and for all $(u,v) \in E(G)$, if $u \in W$ then $v \in W \cup X$.
\end{definition}

In other words, $X$ guards $W$ means that there is no directed path in $G \setminus X$ that starts from $W$ and leaves $W$.

\begin{definition}[{\sc Edge contraction}]\label{definition:contract}
Let $G$ be an undirected graph, and $e = \{u,v\} \in E(G)$. The vertices and edges of the graph $G'$ obtained from $G$ by {\em contracting} $e$ are:
\begin{itemize}
\item{$V(G')=V(G) \setminus {u}$}
\item{$E(G') = (E(G) \cup \{ \{x,v\} : \{x,u\} \in E(G)\})\ \setminus\ \{\{x,u\} :
x \in V (G)\}$ }
\end{itemize}
\end{definition}

\begin{definition}[{\sc Minor}]\label{definition:minor}
Let $G$ and $H$ be undirected graphs. We say that $H$ is a {\em{minor}} of $G$, (denoted by $H \leq G$), if $H$ is isomorphic to a graph obtained from $G$ by a sequence of vertex deletions, edge deletions and edge contractions. These operations may be applied to $G$ in any order, to obtain its directed minor $H$.
\end{definition}

\begin{theorem}
(Robertson-Seymour theorem \cite{graph-minors-20}) Undirected graphs are well-quasi-ordered by the minor relation $\leq$.
\end{theorem}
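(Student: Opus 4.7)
The statement is the Robertson--Seymour Graph Minor Theorem, so I will not pretend there is a short argument; the plan is to sketch the high-level strategy of the original twenty-paper proof, since no genuinely shorter proof is known. The bottom layer is Kruskal's tree theorem via Nash-Williams' minimal-bad-sequence method: assume for contradiction that there is an infinite sequence $T_1, T_2, \ldots$ of finite rooted trees with no $i < j$ such that $T_i$ embeds into $T_j$, take a counterexample minimizing $|V(T_i)|$ at each stage, and then apply Higman's lemma (WQO of finite sequences over a WQO) to the forests obtained by deleting roots, yielding a contradiction. From this I would bootstrap to graphs of bounded treewidth: a tree decomposition lets one encode a width-$k$ graph as a rooted tree whose nodes carry bags of bounded size, so a suitably refined Kruskal-plus-Higman argument shows that, for each fixed $k$, the class of graphs of treewidth at most $k$ is WQO under minors.

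The second, and by far the hardest, ingredient is the excluded-minor structure theorem: for every graph $H$ there is a constant $k=k(H)$ such that every $H$-minor-free graph admits a tree decomposition whose torsos are ``almost embeddable'' in a surface of bounded genus, i.e.\ are obtained from a graph embedded in such a surface by adjoining at most $k$ apex vertices and at most $k$ vortices of depth at most $k$ attached along face boundaries. Proving this requires the Grid Minor Theorem (large treewidth forces a large grid minor), the theory of tangles and their equivalence with brambles, the irrelevant-vertex technique for linkage problems, and an extensive case analysis on how large-treewidth $H$-minor-free graphs decompose along small separators.

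Given the structure theorem, I would prove WQO of graphs embeddable in any fixed surface $\Sigma$, using unique-link embedding arguments that reduce to a Kruskal-style WQO on decorated trees capturing the combinatorial structure of the embedding together with its apex and vortex attachments; Higman's lemma again handles the bounded-size decorations. The final assembly is a single minimal-bad-sequence argument at the top: if there were an infinite antichain $G_1, G_2, \ldots$ under the minor relation, then from index $2$ onward every $G_n$ is $G_1$-minor-free, the structure theorem places each $G_n$ into a tree decomposition of almost-embeddable parts, and the WQO results for the parts combine through the tree-decomposition encoding to produce a pair $G_i \leq G_j$, contradicting antichainness.

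The main obstacle, as is notorious, is the structure theorem together with the Grid Minor Theorem that underlies it; the WQO steps, while technical, are conceptually controlled extensions of Kruskal and Higman, whereas the structural decomposition is a genuinely deep fact whose proof occupies the bulk of the Graph Minors series and has resisted substantial simplification.
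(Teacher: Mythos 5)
The paper does not prove this theorem; it is the celebrated Robertson--Seymour Graph Minor Theorem, stated with a citation to \cite{graph-minors-20} and used as a black box to derive the corollary on finite forbidden-minor characterizations. There is therefore no in-paper proof to compare against. Your sketch is an accurate high-level outline of the actual proof strategy from the Graph Minors series (Kruskal/Higman WQO foundations, the grid and excluded-minor structure theorems, WQO for bounded-genus and almost-embeddable classes, and a top-level minimal-bad-sequence assembly), and you are right that no substantially shorter route is known. For the purposes of this paper, though, the expected response is simply to cite the result rather than to reprove it.
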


\begin{corollary}
Every minor-closed family of undirected graphs has a finite set of minimal forbidden minors. In particular, for all $k \geq 0$, graphs of treewidth (or pathwidth) $\leq k$ are characterized by a finite set of forbidden directed minors.
\end{corollary}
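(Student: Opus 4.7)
The plan is to deduce the corollary directly from the Robertson--Seymour theorem. Let $\mathcal{F}$ be a minor-closed family of undirected graphs and let $\mathcal{B}$ denote the collection of minor-minimal graphs not in $\mathcal{F}$, i.e., those $G \notin \mathcal{F}$ such that every proper minor of $G$ lies in $\mathcal{F}$. First, I would observe that $\mathcal{B}$ is an antichain under the minor relation: if $H, G \in \mathcal{B}$ with $H \leq G$ and $H \neq G$, then $H \notin \mathcal{F}$ would contradict the minimality of $G$. Since Robertson--Seymour asserts that $\leq$ is a well-quasi-order, no infinite antichain exists, and hence $\mathcal{B}$ is finite.

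Next I would argue that $\mathcal{B}$ characterizes $\mathcal{F}$ in the expected sense: $G \in \mathcal{F}$ if and only if no element of $\mathcal{B}$ is a minor of $G$. The ``only if'' direction is immediate from minor-closure of $\mathcal{F}$. For ``if'', suppose $G \notin \mathcal{F}$; among the minors of $G$ that are not in $\mathcal{F}$ (a nonempty set containing $G$ itself), choose one that is minimal under $\leq$, call it $H$. By construction every proper minor of $H$ lies in $\mathcal{F}$, so $H \in \mathcal{B}$, and $H$ is a minor of $G$, proving the contrapositive.

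Finally, I would specialize to treewidth and pathwidth. It is routine to verify from the definitions of tree- and path-decompositions that both parameters are monotone under vertex deletions, edge deletions, and edge contractions: for a vertex/edge deletion one simply restricts the bags of a decomposition, and for a contraction of $\{u,v\}$ one replaces $u$ by $v$ in every bag. Hence for each $k \geq 0$ the families $\set{G : \tw(G) \leq k}$ and $\set{G : \pw(G) \leq k}$ are minor-closed, and the general statement above yields a finite forbidden-minor characterization.

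The argument is essentially bookkeeping once well-quasi-ordering is granted, so there is no serious obstacle. The only step requiring any verification at all is the minor-monotonicity of $\tw$ and $\pw$, which follows by the straightforward bag-manipulation sketched above.
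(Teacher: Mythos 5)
Your proof is correct, and it is the standard derivation of the forbidden-minor characterization from well-quasi-ordering; the paper itself states this corollary without proof (it is a textbook consequence of Robertson--Seymour), so there is nothing to contrast it against. The only steps you glossed over are both harmless: the existence of a minimal bad minor $H$ of $G$ follows because a finite graph has only finitely many minors up to isomorphism, and the antichain bound from well-quasi-ordering is of course a bound on isomorphism classes, which is the right notion since the paper's minor relation is defined up to isomorphism. Your bag-manipulation argument for minor-monotonicity of treewidth and pathwidth is the standard one and is correct (for a contraction of $\{u,v\}$, replacing $u$ by $v$ in every bag preserves coverage and connectivity because the subtrees of bags containing $u$ and containing $v$ already meet at a bag covering the edge $\{u,v\}$).
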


\section{Directed Minors}\label{sec:dir-minors}

In this section, we present a subset of the {\em directed minor operations} from \cite{kintali-minors1}. The {\em directed minor relation} in \cite{kintali-minors1} has more operations called source/sink contractions. These operations are not necessary for our results in the current paper.

When we perform the following operations on a digraph $G$ to obtain a digraph $G'$, we remove any resulting self-loops and multi-edges from $G'$. Cycle contraction is a generalization of the edge contraction from \defref{definition:contract}.

\begin{definition}[{\sc Cycle contraction}]\label{definition:cycle-contract}
Let $G$ be a graph, and $C = \{v_1, v_2, \dots, v_l\} \subseteq V(G)$ be a directed cycle in $G$ i.e., $(v_i, v_{i+1}) \in E(G)$ for $1 \leq i \leq l-1$ and $(v_l, v_1) \in E(G)$. The vertices and edges of the graph $G'$ obtained from $G$ by {\em contracting} $C$ are:
\begin{itemize}
\item{$V(G')=\{V(G) \setminus {C} \} \cup \{ w \}$, where $w$ is a new vertex.}
\item{$E(G') = \{ E(G) \setminus \{ \{ (x,v_i) : 1 \leq i \leq l, x \in V(G) \} \cup \{ (v_i,x) : 1 \leq i \leq l, x \in V(G) \} \} \} \\
\cup\ \{ \{ (x,w) : (x,v_i) \in E(G)\ \mbox{for some}\ x \in V(G) \setminus C, 1 \leq i \leq l\} \cup \{ (w,x) : (v_i,x) \in E(G)\ \mbox{for some}\ x \in V(G) \setminus C, 1 \leq i \leq l \} \}$ }
\end{itemize}
\end{definition}

Butterfly contractions (defined by Johnson et al \cite{dtw-definition}) allow us to contract a directed edge $e = (u,v)$ if either $e$ is the only edge with head $v$, or it is the only edge with tail $u$, or both. The following operations, {\em out-contraction} and {\em in-contraction}, are slightly general and allows {\em any} edge to be out-contracted or in-contracted after removing certain incident edges. Out-contracting an edge $(u,v)$ is equivalent to removing all the out-going edges of $u$ and identifying $u$ and $v$. Similarly, in-contracting an edge $(u,v)$ is equivalent to removing all the in-coming edges of $v$ and identifying $u$ and $v$. Hence, we can out-contract or in-contract {\em any} edge of $G$ without creating new paths in $G$.

\begin{definition}[{\sc Out contraction}]\label{definition:out-contract}
Let $G$ be a graph, and $e = (u,v) \in E(G)$. The vertices and edges of the graph $G'$ obtained from $G$ by {\em out-contracting} $e$ are:
\begin{itemize}
\item{$V(G')=V(G) \setminus {u}$}
\item{$E(G') = (E(G) \cup \{ (x,v) : (x,u) \in E(G)\} )\ \setminus\ \{(x,u),(u,x) :
x \in V (G)\}$ }
\end{itemize}
Note that we delete the vertex $u$, the tail of the edge $e = (u,v)$. The vertex $v$ exists in both $G$ and $G'$.
\end{definition}

\begin{definition}[{\sc In contraction}]\label{definition:in-contract}
Let $G$ be a graph, and $e = (u,v) \in E(G)$. The vertices and edges of the graph $G'$ obtained from $G$ by {\em in-contracting} $e$ are:
\begin{itemize}
\item{$V(G')=V(G) \setminus {v}$}
\item{$E(G') = (E(G) \cup \{ (u,x) : (v,x) \in E(G)\} )\ \setminus\ \{(x,v),(v,x) :
x \in V (G)\}$ }
\end{itemize}
Note that we delete the vertex $v$, the head of the edge $e = (u,v)$. The vertex $u$ exists in both $G$ and $G'$.
\end{definition}

\begin{definition}[{\sc Directed minor}]\label{definition:directed-minor}
Let $G$ and $H$ be digraphs. We say that $H$ is a {\em{directed minor}} of $G$, (denoted by $H \krel G$), if $H$ is isomorphic to a graph obtained from $G$ by a sequence of vertex deletions, edge deletions, cycle contractions, out/in contractions. These operations may be applied to $G$ in any order, to obtain its directed minor $H$.
\end{definition}

\section{Kelly-width and partial $k$-DAGs}\label{sec:kw}

We now present the definitions of Kelly-decomposition, Kelly-width and its equivalent characterizations such as partial $k$-DAGs, directed elimination orderings and cops and (invisible and inert) robber games from \cite{kellywidth-definition}. The class of $k$-trees are generalizations of trees. Similarly, $k$-DAGs are generalizations of DAGs. Directed vertex elimination involves removing vertices from a digraph and adding new directed edges to preserve directed reachability.

\begin{definition}[Kelly-decomposition and Kelly-width \cite{kellywidth-definition}]\label{definition:Kelly-decomposition}
A {\it Kelly-decomposition} of a digraph $G$ is a triple $\mathcal{D} = (T, \mathcal{W}, \mathcal{X})$ where $T$ is a DAG, and $\mathcal{W}=(W_i)_{i\in V(T)}$ and $\mathcal{X}=(X_i)_{i\in V(T)}$ are families of subsets (node bags) of $V(G)$, such that:
\begin{itemize}
\item $\mathcal{W}$ is a partition of $V(G)$. \hfill{\rm{\sf (KW-1)}}
\item For all nodes $i \in V(T), X_i$ guards $W_{\succeq i}$. \hfill{\rm{\sf (KW-2)}}
\item For each node $i \in V(T)$, the children of $i$ can be enumerated as $j_1, ... , j_s$ so that for each $j_q$, $X_{j_q} \subseteq W_i \cup X_i \cup \bigcup_{p<q} W_{\succeq j_p}$. Also, the roots of $T$ can be enumerated as $r_1, r_2, ...$ such that for each root $r_q$, $W_{r_q} \subseteq \bigcup_{p<q} W_{\succeq r_p}$. \hfill{\rm{\sf (KW-3)}}
\end{itemize}
The width of a Kelly-decomposition $\mathcal{D}=(T,\mathcal{W},\mathcal{X})$ is defined as $\max\{|W_i \cup X_i| : i \in V(T)\}$. The {\it Kelly-width} of $G$, denoted by $kw(G)$, is the minimum width over all possible Kelly-decompositions of $G$.
\end{definition}

\begin{definition}[Partial $k$-DAG \cite{kellywidth-definition}]\label{definition:partial-k-DAG}
The class of $k$-DAGs is defined recursively as follows:
\begin{itemize}
\item{A complete digraph with $k$ vertices is a $k$-DAG.}
\item{A $k$-DAG with $n+1$ vertices can be constructed from a $k$-DAG $H$ with $n$ vertices by adding a vertex $v$ and edges satisfying the following:
    \begin{itemize}
    \item{At most $k$ edges from $v$ to $H$ are added}
    \item{If $X$ is the set of endpoints of the edges added in the previous step, an edge from $u \in V(H)$ to $v$ is added if $(u,w) \in E(H)$ for all $w \in X \setminus \{u\}$. Note that if $X = \emptyset$, this condition is true for all $u \in V(H)$.}
    \end{itemize}
A {\em partial $k$-DAG} is a subgraph of a $k$-DAG.
}
\end{itemize}
\end{definition}

\begin{definition}[Directed elimination ordering \cite{kellywidth-definition}]\label{definition:elimination-ordering}
Let $G$ be a digraph.
\begin{itemize}
\item{A directed elimination ordering $\vartriangleleft$ is a linear ordering on $V(G)$.}
\item{Given an elimination ordering $\vartriangleleft := (v_0, v_1, \dots, v_{n-1})$ of $G$, define :
    \begin{itemize}
    \item{$G_0^{\vartriangleleft} := G$ and}
    \item{$G_{i+1}^{\vartriangleleft} := G$ is obtained from $G_{i}^{\vartriangleleft} := G$ by deleting $v_i$ and (if necessary) adding new edges $(u,v)$ if $(u,v_i), (v_i,v) \in E(G_{i}^{\vartriangleleft})$ and $u \neq v$.}
    \end{itemize}
$G_{i}^{\vartriangleleft}$ is the {\em directed elimination graph at step i according to $\vartriangleleft$.}
}
\item{The {\em width} of an elimination ordering is the maximum over all $i$ of the out-degree of $v_i$ in $G_{i}^{\vartriangleleft}$.}
\end{itemize}
\end{definition}

\noindent {\bf Cops and (invisible and inert/lazy) robber games on digraphs} : There are $k$ cops trying to catch a robber on a digraph. The robber occupies a vertex. Each cop either occupies a vertex or moves around in an helicopter. The robber can always see the cops' locations and the helicopters landing. He can move to another vertex at an infinite speed along a cop-free directed path. The goal of the cops is to capture the robber by landing on the vertex currently occupied by him. The goal of the robber is to avoid capture. Hunter and Kreutzer \cite{kellywidth-definition} showed that Kelly-width is characterized by cops and robber game in which the robber is invisible and inert (i.e., the robber may only move if a cop is about to occupy the robber's current vertex).

\begin{theorem}\label{thm:kw-charac}
(Hunter and Kreutzer \cite{kellywidth-definition}) Let $G$ be a digraph. The following are equivalent: (i) $G$ has Kelly-width $\leq k+1$, (ii) $G$ is a partial $k$-DAG and (iii) $G$ has a directed elimination ordering of width $\leq k$ (iv) $k+1$ cops can capture an invisible and inert robber on $G$.
\end{theorem}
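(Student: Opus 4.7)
The plan is to establish the four-way equivalence via a cycle of implications, exploiting the common ``peel one vertex at a time'' structure underlying all four notions. A natural cycle is (ii) $\Rightarrow$ (iii) $\Rightarrow$ (iv) $\Rightarrow$ (i) $\Rightarrow$ (ii). The conceptually cleanest links are (ii) $\Leftrightarrow$ (iii), since both are really orderings of $V(G)$ witnessing bounded out-degree under vertex elimination, while the technically heavier steps are the translations to and from Kelly-decompositions and cop-and-robber strategies.

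For (ii) $\Rightarrow$ (iii), I would take the construction ordering of a $k$-DAG $H \supseteq G$ from \defref{definition:partial-k-DAG} and reverse it. When the last-added vertex $v$ is eliminated first, its out-neighbors in $H$ (and hence in $G$) number at most $k$, and the closure condition ``$(u,w) \in E(H)$ for all $w \in X \setminus \{u\}$'' ensures that eliminating $v$ does not introduce any new edges in $H \setminus \{v\}$ beyond those already present. Iterating, each $v_i$ has out-degree at most $k$ in $G_i^{\vartriangleleft}$. The converse (iii) $\Rightarrow$ (ii) is the symmetric statement: reading the elimination ordering backwards and inserting each $v_i$ with its out-neighbors in $G_i^{\vartriangleleft}$ produces a $k$-DAG containing $G$ as a subgraph.

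For (iii) $\Rightarrow$ (iv), I would build a cop strategy directly from the elimination ordering. Inductively, when ``eliminating'' $v_i$, place $k+1$ cops on $v_i$ together with its (at most $k$) out-neighbors in $G_i^{\vartriangleleft}$; these out-neighbors form a guard that separates the already-processed prefix from the rest of the graph, so the inert invisible robber can never re-enter a cleared region. For (iv) $\Rightarrow$ (i), one converts a winning strategy tree for the cops into a Kelly-decomposition: the nodes of $T$ correspond to cop configurations during play, $X_i$ is the current cop set, and $W_i$ collects the vertices newly separated from the robber's reachable set between a node and its parent. The guarding property (KW-2) is exactly the cops' containment invariant, and the enumeration of children in (KW-3) comes from the order in which different components of the robber's current reachable set are successively attacked. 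Finally, (i) $\Rightarrow$ (ii) is obtained by traversing $T$ in a reverse topological order consistent with (KW-3) and showing bag-by-bag that every $v \in W_i$ can be added atop the previously constructed $k$-DAG with its out-neighbors sitting in $X_i$, whose size is at most $k$ by the width bound.

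The main obstacle is (iv) $\Rightarrow$ (i). The cop strategy is a branching object whose shape depends on the robber's choices, and collapsing it into a single DAG $T$ satisfying both (KW-2) and the subtle child-enumeration condition of (KW-3) requires a careful inductive pruning: one must group branches corresponding to distinct connected components of the robber's reachability set, order them so that each subsequent guard $X_{j_q}$ lies in $W_i \cup X_i \cup \bigcup_{p<q} W_{\succeq j_p}$, and verify that cop moves between configurations correspond to edges of $T$. The remaining implications reduce to routine translations between vertex orderings and out-degree bounds, so the entire argument hinges on making the strategy-to-decomposition conversion precise.
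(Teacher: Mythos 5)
The paper does not prove this theorem. \thref{thm:kw-charac} is stated as a known result of Hunter and Kreutzer with a citation to \cite{kellywidth-definition}, and the authors subsequently treat it as a black box --- for instance, \lemref{lem:kw-monotone} appeals to the equivalence (i) $\Leftrightarrow$ (iv) to conclude that Kelly-width is monotone under the directed minor operations, and the proof of the main theorem uses (ii) $\Leftrightarrow$ (iii) via the elimination ordering. There is therefore no proof in the paper against which your proposal can be compared; the move the paper makes at this point is simply to cite Hunter and Kreutzer.

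As a reconstruction of the cited argument, your outline is sensible: (ii) $\Leftrightarrow$ (iii) really is a repackaging of the same vertex ordering (reversing the $k$-DAG construction order gives an elimination ordering, and conversely the out-neighborhoods $N^{G_i^{\vartriangleleft}}_{out}(v_i)$ supply the at-most-$k$ out-edges when rebuilding a $k$-DAG), and you correctly identify (iv) $\Rightarrow$ (i) --- converting a winning strategy for $k+1$ cops against an invisible inert robber into a Kelly-decomposition satisfying the child-enumeration condition (KW-3) --- as the technically hard step. But since the present paper deliberately delegates the entire theorem to \cite{kellywidth-definition}, filling in these details is out of scope here, and your sketch should be replaced by the citation.
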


\begin{lemma}\label{lem:kw-monotone}
Let $H \krel G$ and $kw(G) \leq k$. Then, $kw(H) \leq k$. In other words, Kelly-width is monotone under the directed minor operations mentioned in \secref{sec:dir-minors}.
\end{lemma}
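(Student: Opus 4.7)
The plan is to verify monotonicity one directed minor operation at a time, freely switching between the equivalent characterizations of Kelly-width supplied by \thref{thm:kw-charac}. For vertex deletion, I would reuse the given Kelly-decomposition of $G$ and simply remove the deleted vertex from every $W_i$ and $X_i$; conditions (KW-1)--(KW-3) descend to the smaller graph and the width cannot grow. For edge deletion, the identical decomposition remains valid, because the guarding requirement (KW-2) only becomes easier to satisfy.

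For out-contraction of an edge $(u,v)$, the plan is to show that the operation decomposes into two width-preserving steps: first delete every out-edge $(u,x)$ with $x \neq v$, which produces a subgraph of $G$ (hence with Kelly-width at most $kw(G)$); then butterfly-contract $(u,v)$ in the resulting graph, where $(u,v)$ is now the unique out-edge of $u$ and therefore a legitimate butterfly edge. A direct edge-set comparison shows that this two-step recipe reproduces \defref{definition:out-contract}, and since Hunter and Kreutzer \cite{kellywidth-definition} establish that butterfly contraction preserves Kelly-width, so does out-contraction. In-contraction is handled symmetrically by first deleting every in-edge of $v$ other than $(u,v)$ and then butterfly-contracting $(u,v)$.

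Cycle contraction is the remaining case, and it does not reduce to butterfly contractions, because any out- or in-contraction along $C$ would discard some external edge that cycle contraction is required to preserve. I would therefore argue directly with elimination orderings (\defref{definition:elimination-ordering}). Start with an ordering $\vartriangleleft$ of $G$ of width $\leq k$ and construct $\vartriangleleft'$ on $V(G')$ by inheriting the order on $V(G) \setminus C$, deleting every vertex of $C$ except the one appearing last in $\vartriangleleft$, and placing the contracted vertex $w$ at that last position. Let $\phi \colon V(G) \to V(G')$ be the quotient map that sends every $v_i \in C$ to $w$ and fixes every other vertex. The key inductive claim is that at every step $t$, the edge set of the elimination graph ${G'}^{\vartriangleleft'}_t$ is contained in the $\phi$-image of the edge set of the corresponding elimination graph $G^{\vartriangleleft}_{s(t)}$, where $s(t)$ is the matching step of $\vartriangleleft$. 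Granting this, the out-degree at elimination of any $x \neq w$ in $\vartriangleleft'$ is dominated by the out-degree of $x$ at its elimination step in $\vartriangleleft$, and the out-degree of $w$ at its elimination step is dominated by the out-degree of the last cycle vertex in $\vartriangleleft$; both bounds are $\leq k$, so $\vartriangleleft'$ has width $\leq k$ and \thref{thm:kw-charac} yields $kw(G') \leq kw(G)$.

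The hardest part will be verifying this inductive claim. One needs to track carefully how fill-in edges produced by eliminating a cycle vertex in $G$ either collapse to self-loops at $w$ in $G'$ (which are discarded) or are merely postponed until $w$ itself is eliminated in $\vartriangleleft'$, and argue that the fill-ins propagating along $C$ funnel all the external neighbours of the cycle onto the last surviving $v_i$ in $G$ by the moment of its elimination, matching exactly the external neighbours that $w$ presents in $G'$ at its own elimination step. The strong connectivity of the directed cycle $C$ is what drives this funnelling, and once the bookkeeping is set up, the desired out-degree bound follows immediately.
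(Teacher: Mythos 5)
Your proof takes a genuinely different route from the paper. The paper gives a single uniform argument via the cops-and-(invisible, inert)-robber characterization of \thref{thm:kw-charac}: the directed minor operations create no new directed paths or cycles, so a winning strategy for $k$ cops on $G$ transfers verbatim to any directed minor $H$, and there is nothing further to check. Your plan instead handles each operation separately, cycling through the equivalent characterizations --- Kelly-decompositions for vertex and edge deletion, a reduction to butterfly contraction for out/in-contraction, and an explicit elimination-ordering construction for cycle contraction. The game argument is shorter and avoids all case analysis; your argument is more concrete and, if completed, would hand back an explicit decomposition or ordering witnessing the bound on $H$.

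Two points need shoring up before the proposal is a proof. First, the decomposition of out-contraction as ``delete all out-edges of $u$ except $(u,v)$, then butterfly-contract $(u,v)$'' is a correct edge-set computation, but the load-bearing citation --- that Hunter and Kreutzer establish monotonicity of Kelly-width under butterfly contraction --- should be verified against \cite{kellywidth-definition}. The present paper explicitly remarks that directed \emph{treewidth} is not butterfly-minor monotone, and I do not find the corresponding positive statement for Kelly-width in that reference; if it is not there, you should instead run for out-/in-contraction the same elimination-ordering argument you already sketch for cycles (place the merged vertex at the later of $u$'s and $v$'s positions and verify the fill-in inclusion). Second, in the cycle case the entire weight of the ``funnelling'' claim rests on a fact you should state outright: because $w$ occupies the slot of the \emph{last} cycle vertex $v_\ell$ of $\vartriangleleft$, every other vertex of $C$ is already eliminated by the time $w$ (resp.\ $v_\ell$) comes up, so any fill-in path through $w$ in $G'$ lifts, by routing along the directed cycle $C$, to a fill-in path in $G$ whose internal vertices are all eliminated. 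With that made explicit the out-degree domination follows; without it the inductive claim is an assertion.
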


\begin{proof}
Note that the directed minor operations (from \secref{sec:dir-minors}) do not create new directed cycles i.e., if $H \krel G$ and $H$ contains a directed cycle (on vertices $u$ and $v$) then $G$ also contains a directed cycle (on vertices $u$ and $v$). Hence, these operations  do not help the robber in the above mentioned game i.e., if $k$ cops are sufficient to capture a robber on $G$ then they are sufficient on any directed minor of $G$. Since Kelly-width is precisely characterized by this game (see \thref{thm:kw-charac}), digraphs of bounded Kelly-width are monotone under $\krel$. 
\end{proof}


The above lemma combined with the well-quasi-ordering conjecture of \cite{kintali-minors1} implies that digraphs of bounded Kelly-width are characterized by finite number of forbidden directed minors. The following lemma characterizes digraphs of Kelly-width one (i.e., partial 0-DAGs) in terms of forbidden directed minors. An undirected graph $G$ is a partial 0-tree (i.e., has treewidth zero) if and only if it contains no $K_2$ minor. Similarly, the following lemma states that a digraph $G$ is a partial 0-DAG if and only if it contains no $K_2$ minor.

\begin{lemma} \label{lem:charac-dags}
A digraph $G$ is a partial 0-DAG if and only if it contains no $K_2$ minor.
\end{lemma}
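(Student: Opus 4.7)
The plan is first to observe that partial 0-DAGs are exactly the simple DAGs, and then handle each direction separately. Unwinding \defref{definition:partial-k-DAG} with $k=0$: one starts from the empty digraph, and each newly added vertex $v$ admits at most zero out-edges, so $X=\emptyset$ and the incoming-edge rule forces the edge $(u,v)$ for every previously added $u$ (the condition ``$(u,w)\in E(H)$ for all $w\in X\setminus\{u\}$'' is vacuous). Hence the $0$-DAG on $n$ vertices is the transitive tournament along the insertion order; since any simple DAG embeds as a subgraph of the transitive tournament determined by a topological ordering, the class of partial 0-DAGs coincides with the class of simple DAGs. Here the forbidden minor $K_2$ is read as $\overset{\leftrightarrow}{K_2}$, i.e.\ the directed 2-cycle on two vertices.

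For the ``only if'' direction, suppose $G$ is a simple DAG. The digraph $K_2$ contains a directed cycle, so if $G$ had $K_2$ as a directed minor, some sequence of operations from \secref{sec:dir-minors} applied to $G$ would have to produce a digraph containing a directed cycle. But the observation used in the proof of \lemref{lem:kw-monotone} says precisely that none of these operations creates a new directed cycle. Thus every directed minor of a DAG is a DAG, and $K_2$ cannot arise as a directed minor of $G$.

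For the ``if'' direction I argue contrapositively: if $G$ is not a DAG, pick a (simple) directed cycle $v_1,v_2,\dots,v_l,v_1$ in $G$. First delete every vertex off the cycle and every edge not used by the cycle, which produces $\vec{C}_l$ as a directed minor of $G$. If $l=2$ this is already $K_2$; otherwise, using \defref{definition:out-contract}, out-contract the edge $(v_l,v_1)$: this removes $v_l$, sends the remaining in-edge $(v_{l-1},v_l)$ to $(v_{l-1},v_1)$, and discards $(v_l,v_1)$, producing $\vec{C}_{l-1}$ on $\{v_1,\dots,v_{l-1}\}$. Iterating $l-2$ times yields $\vec{C}_2 = K_2$. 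The only mild subtlety in the whole argument lies in the first paragraph --- correctly reading \defref{definition:partial-k-DAG} with $k=0$ to conclude that partial 0-DAGs are exactly DAGs --- after which the two directions follow immediately from the cycle-preservation property of the directed minor operations and a routine shrinking of a cycle by repeated out-contractions.
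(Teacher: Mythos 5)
Your proof is correct and takes essentially the same approach as the paper: identifying partial 0-DAGs with simple DAGs, using the cycle-preservation property of the directed-minor operations for the forward direction, and extracting a directed cycle and shrinking it to $K_2$ by repeated out-contractions for the converse. You simply spell out the unwinding of \defref{definition:partial-k-DAG} and the out-contraction iteration in more detail than the paper does.
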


\begin{proof}
From \defref{definition:partial-k-DAG} it is easy to see that partial 0-DAGs are precisely digraphs whose weakly connected components are DAGs. We may assume that $G$ is weakly connected. $K_2$ is the directed graph on two vertices, say $u$ and $v$, with two edges $(u,v)$ and $(v,u)$. If $G$ has a directed cycle with edge set $C \subseteq E(G)$, let $G' = G[C]$. By repeatedly out-contracting the edges of $G'$ we obtain a $K_2$ minor.

To show the other direction, let $G$ be a digraph with a $K_2$ minor. As observed earlier, the directed minor operations do not create new directed cycles i.e., if $H \krel G$ and $H$ contains a directed cycle (on vertices $u$ and $v$) then $G$ also contains a directed cycle (on vertices $u$ and $v$). Hence, there is a directed path from $u$ to $v$ and a directed path from $v$ to $u$ in $G$, implying that $G$ is not a DAG.
\end{proof}

\section{Characterization of  partial 1-DAGs}

It is well-known that an undirected graph is a partial 1-tree (i.e., a forest) if and only if it has no $K_3$ minor. In this section, we generalize this characterization to partial 1-DAGs. We show that partial 1-DAGs are characterized by three forbidden directed minors, $K_3, N_4$ and $M_5$ (see \figref{fig:kw-minors})\footnote{Note that the bidirected edges of $N_4$ and $M_5$ resemble the letters $N$ and $M$ respectively.}. First, we show that any digraph, in which every vertex has out-degree at least 2, contains $K_3, N_4$ or $M_5$ as a directed minor.

\begin{figure}
    \centering
    \includegraphics[height = 2in]{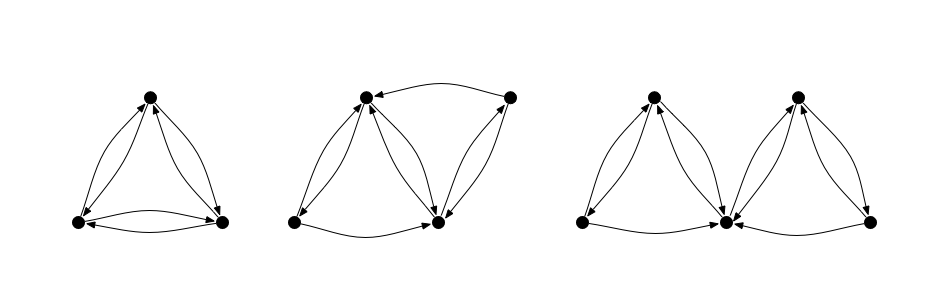}
    \caption{The forbidden directed minors of partial 1-DAGs: $K_3$, $N_4$ and $M_5$}
    \label{fig:kw-minors}
\end{figure}

\begin{theorem}\label{thm:out-two-minors}
Let $G$ be a simple digraph such that every vertex in $V(G)$ has out-degree $\geq 2$. Then $G$ contains $K_3, N_4$ or $M_5$ as a directed minor.
\end{theorem}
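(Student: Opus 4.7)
The plan is to induct on $|V(G)|$. For the base case, observe that in a simple digraph every vertex has out-degree at most $|V(G)|-1$, so the hypothesis forces $|V(G)| \geq 3$; when $|V(G)|=3$, each vertex must have out-edges to both others, making $G$ the bidirected triangle $K_3$ itself.

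For the inductive step, I would exploit a shortest directed cycle in $G$, which exists because a DAG has a sink that would violate the min-out-degree hypothesis. Call it $C$, of length $l$, and split on $l$. When $l \geq 3$, I would contract $C$ to a single vertex $v^{*}$ via the cycle contraction of \defref{definition:cycle-contract}, obtaining $G'$ with strictly fewer vertices. If $G'$ still has minimum out-degree $\geq 2$, induction supplies one of $K_3, N_4, M_5$ as a directed minor of $G'$, and hence of $G$. Otherwise some vertex has out-degree $< 2$ in $G'$: either it is $v^{*}$, constraining $V(C)$ to send at most one edge outside itself, or it is some external vertex $w$ whose two original out-edges both entered $V(C)$ and were then identified into a single edge $(w, v^{*})$. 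In either sub-case the local structure of $G$ near $C$ is tightly controlled, and I would exhibit the required forbidden minor directly.

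When $l=2$, $G$ contains a bidirected edge $u \leftrightarrow v$, and by the out-degree hypothesis I can choose $a \neq v$ with $(u,a) \in E(G)$ and $b \neq u$ with $(v,b) \in E(G)$. I would then run a finite sub-case analysis on $(a,b)$: whether $a = b$, what additional edges exist among $\{u,v,a,b\}$, and how the out-neighbors of $a$ and $b$ (each of which also has out-degree at least two) loop back to $\{u,v\}$ or to each other. Short closures produce a second bidirected edge sharing a vertex with $u \leftrightarrow v$ and thus yield $K_3$ after a few in-/out-contractions; longer closures yield the $N_4$ or $M_5$ patterns, whose bidirected backbones resemble the letters N and M. The principal obstacle is organizing this sub-case analysis cleanly and verifying that the prescribed contractions produce \emph{exactly} one of the three target minors rather than some other non-partial-$1$-DAG configuration; the minimality of $C$ and the repeated application of the min-out-degree hypothesis to newly encountered vertices are the tools that keep the analysis finite.
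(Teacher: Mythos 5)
Your proposal shares the paper's broad strategy of reducing to a strongly connected digraph where every out-degree is exactly $2$, finding a bidirected edge, and then chasing structure through a finite case split. But it diverges at the two steps that actually carry the proof, and in both places the sketch leaves a genuine gap rather than a deferrable bookkeeping detail.

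First, your inductive step for a shortest cycle of length $l \geq 3$ is not resolved. After cycle-contracting $C$ to $v^{*}$, you correctly observe that $G'$ may fail the out-degree hypothesis, either at $v^{*}$ (when all of $C$'s external out-edges hit the same vertex) or at an external vertex $w$ whose two out-neighbors both lay on $C$. Saying ``the local structure of $G$ near $C$ is tightly controlled, and I would exhibit the required forbidden minor directly'' is precisely the part that has to be proved, and it is not obviously easier than the original theorem; in particular there is no bidirected edge in this regime to anchor an $N_4$ or $M_5$ pattern. The paper sidesteps this entirely: it never branches on cycle length. Instead it works with a \emph{directed-minor-minimal} counter-example and extracts structure from the fact that every single directed-minor operation on $G$ must destroy the hypothesis. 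Concretely, out-contracting any directed (non-bidirected) edge $(u,v)$ must create a vertex of out-degree one, which forces the existence of a ``blocker'' $w$ with $(w,u),(w,v)\in E$, and each vertex blocks at most one directed edge. This yields the inequality $\alpha \le |V|$ on the number $\alpha$ of directed edges, which combined with $\alpha + 2\beta = 2|V|$ gives $\beta \ge |V|/2$ bidirected edges. That counting lemma is the engine that proves a bidirected edge always exists (so the $l \ge 3$ case never arises for a minimal counter-example) and that equips the subsequent case analysis with enough blockers and bidirected edges to close every branch. Your proposal does not discover or replace this mechanism.

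Second, the $l=2$ case analysis contains an incorrect claim. Two bidirected edges sharing a vertex, say $u \leftrightarrow v$ and $v \leftrightarrow w$ with no edge between $u$ and $w$, do \emph{not} yield a $K_3$ directed minor: contracting either $2$-cycle collapses the configuration to a single bidirected edge ($K_2$). To land on $K_3$ one needs the third pair to be bidirected as well, or additional directed paths to close the triangle from both directions; the paper's Case~1 produces $K_3$ only when all three pairs are bidirected, and Cases~2 and~3 produce $K_3$ only after explicitly locating and out-contracting a return path. As written, your ``short closures $\Rightarrow K_3$'' step does not go through. More broadly, the $l=2$ branch is where essentially all the work in the paper lives --- roughly a dozen sub-cases, each requiring specific strongly-connectivity paths to be found disjoint from the vertices already in play --- and the proposal compresses this to a one-sentence promise. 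The blocker/counting structure is what keeps that case tree from exploding, and without it the ``finite sub-case analysis'' you invoke is not demonstrably finite or complete.
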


\begin{proof}
Let $G$ be a {\em minor-minimal} counter-example to the above theorem. Let $V$ and $E$ denote the vertex set and the edge set of $G$ respectively. We may assume that $G$ is weakly connected, or else any weakly-connected component of $G$ is a smaller counter-example, contradicting the minimality of $G$. We may assume that every vertex of $G$ has outdegree {\em exactly} 2, or else we may delete an edge and obtain a smaller counter-example. This contradicts the minimality of $G$.

Furthermore, we may assume $G$ is strongly connected, or else we may partition $V$ into two non-empty sets $A$ and $B$ such that there are no edges {\em from} any vertex in $A$ {\em to} any vertex in $B$. Now, consider $G[A]$, the induced subgraph on $A$. Every vertex in $G[A]$ has outdegree 2 and $G[A]$ does not contain $K_3, N_4$ or $M_5$ as a directed minor (because $G$ does not contain $K_3, N_4$ or $M_5$ as a directed minor and $G[A]$ is a subgraph of $G$). This contradicts the minimality of $G$.

We may assume that there is an edge $(u, v) \in E$ such that $(v, u) \not \in E$, since otherwise $G$ has a $K_3$ minor. We claim that there exists a vertex in $G$ whose out-neighbors are $u$ and $v$. To see this, consider the graph $G'$ obtained by {\em out-contracting} the edge $(u,v)$. Note that the out-degree of $v$ in $G'$ is still two. By the minimality of $G$, there must exist a vertex $w \neq v$ in $G'$ whose out-degree is 1. But the out-degree of $w$ in $G$ is 2, implying that the two out-neighbors of $w$ in $G$ are $u$ and $v$. Hence, for every edge $(u, v) \in E$ such that $(v, u) \not \in E$ there exists a vertex $w$ such $(w, u), (w, v) \in E$. We say that $w$ {\em blocks} the edge $(u,v)$. Note that there can be more than one vertex {\em blocking} an edge $(u,v)$. On the other hand, a vertex $w$ can {\em block} at most one edge because every vertex in $G$ has out-degree two.

Let $\{u,v\}$ be a {\em bidirected edge} in $G$ i.e., $(u, v), (v, u) \in E$. We claim that there exists a vertex $w$ in $G$ such that $(w, u), (w, v) \in E$ or $(u, w), (v, w) \in E$. To see this, consider the graph $G'$ obtained by {\em contracting} the bidirected edge $\{u,v\}$ into a new vertex $x$. Note that $G'$ is strongly-connected. By the minimality of $G$, there must exist a vertex $w$ in $G'$ whose out-degree is 1. If $w \neq x$, then $(w, u), (w, v) \in E$. Else if $w = x$, then $u$ and $v$ have a common out-neighbor (say $w'$) in $G$ i.e., $(u, w'), (v, w') \in E$. \\

Now, let $\alpha$ be the number of {\em directed} edges of $G$ (i.e. edges $(u, v) \in E$ such that $(v, u) \not \in E$) and $\beta$ be the number of {\em bidirected} edges (i.e., pairs $\{u,v\}$ such that $(u, v), (v, u) \in E$). Since every vertex of $G$ has outdegree 2, we have $\alpha + 2\beta = 2\abs{V}$. As shown earlier, any directed edge must have at least one vertex {\em blocking} it. Any {\em blocker vertex} blocks a unique directed edge. Hence $\alpha \leq \abs{V}$. This implies that $\beta \geq \abs{V}/2$. In particular, there must exist at least one bidirected edge in $G$, say $ e= (u,v)$. Using one of the previous arguments, there exists $w \in V$ such that $(w, u), (w, v) \in E$ or $(u, w), (v, w) \in E$. Now, we consider several cases and show that in every case $G$ contains a $K_3$, $N_4$ or $M_5$ minor. \\

\textbf{Case 1}: Let $(w, u), (w, v), (u, w), (v, w) \in E$ implying that $\{u, v, w\}$ induce $K_3$. So, $G$ contains a $K_3$ minor, a contradiction. \\

\textbf{Case 2}: Let $(w, u), (w, v), (u, w) \in E, (v, w) \not \in E$\footnote{the case when $(w, u), (w, v), (v, w) \in E, (u, w) \not \in E$ is similar}. Since the out-degree of $v$ is two, there must exist a vertex $a$ such that $a \neq u, a \neq w$ and $(v, a) \in E$. By strong-connectivity of $G$, there is a directed path $P_{av_n} = av_1v_2...v_n$ such that $v_n = u$ (or $v$ or $w$) and $v_i \not \in \set{u, v, w}$ for $1 < i < n$. \\

\textbf{Case 2.1}: Let $v_n = w$ i.e., we have a path from $a$ to $w$. Now we contract $P_{aw}$ by applying the out-contract operation in a backwards fashion. We first out-contract the edge $(v_{n-1}, v_n)$ i.e., we remove all the out-going edges of $v_{n-1}$ and identify $v_{n-1}$ and $v_n$ such that the remaining vertex is $v_n$ (see \defref{definition:out-contract}). Now we out-contract $(v_{n-2}, v_n)$ and so on until $a$ and $v_n$ are identified. After this process we are left with an edge $(v, v_n) = (v, w)$. Note that the edges between $u, v$ and $w$ are not affected by this process. Therefore, we get a $K_3$ minor on the vertices $\set{u, v, w}$.\\

\textbf{Case 2.2}: Let $v_n = u$ i.e., we have a path from $a$ to $u$. Now we have two cases based on whether there is an edge from $a$ to $v$. \\

\textbf{Case 2.2.1}: $(a, v) \in E$ i.e., $\{a,v\}$ is a bidirected edge. We apply the out-contract operation on $P_{au}$ in a backwards fashion until we obtain the edge $(a, v_n) = (a, u)$. Now observe that we have an $N_4$ minor on the vertices $\set{u, v, w, a}$.  \\

\textbf{Case 2.2.2}: $(a, v) \not \in E$ i.e., there exists a vertex $b \in V$ that blocks the edge $(v,a)$ i.e., $(b, v), (b, a) \in E$. Note that $b \neq u$ and $b \neq w$ because $u$ and $w$ already have out-degree two. Since $G$ is strongly connected, there is a path $P_{ab}$ from $a$ to $b$. Consider the induced subgraph $H = G[\{u,v,w\}]$. The two out-neighbors of $u$ and $w$ are inside $H$. The only out-going edge from $H$ to $G[E(G) \setminus E(H)]$ is $(v,a)$. Hence, $P_{ab}$ being a simple path from $a$ to $b$, does not intersect $\set{u, v, w}$. By its definition $P_{au}$ does not intersect $\set{v, w}$. Also, $P_{au}$ does not intersect $b$ because the two out-neighbors of $b$ are $v$ and $a$.

Now we contract $P_{ab}$ and $P_{au}$ carefully. Let $x$ be the last vertex along the path $P_{au}$ that is also on $P_{ab}$. Let $P_{ax}$ be the subpath of $P_{ab}$. We out-contract the edges of $P_{ax}$ in the forwards direction until the entire path is identified into a single vertex $x$. Now $P_{xb}$ and $P_{xu}$ are internally vertex-disjoint paths. We out-contract the edges of $P_{xb}$ and $P_{xu}$ in a backwards fashion until we have edges $(x, b), (x, u)$. Now, we out-contract $(b, v)$ to obtain the edge $(a, v)$. Now observe that we have an $N_4$ minor on the vertices $\set{u, v, w, a}$. \\

\textbf{Case 2.3}: Let $v_n = v$ i.e., we have a path from $a$ to $v$. Now we have two cases based on whether there is an edge from $a$ to $v$. \\

\textbf{Case 2.3.1}: $(a, v) \in E$ i.e., $\{a,v\}$ is a bidirected edge. Hence, there exists a vertex $b$ such that $(b, v), (b, a) \in E$ or $(v, b), (a, b) \in E$. \\

\textbf{Case 2.3.1.1}: If $(b, v), (b, a) \in E$ then $b \neq u$ and $b \neq w$ because $u$ and $w$ already have out-degree two. Since $G$ is strongly connected, there must be a path $P_{ab}$ from $a$ to $b$. Using the argument from Case 2.2.2, $P_{ab}$ does not intersect $\set{u, v, w}$. Out-contracting $P_{ab}$ backwards until we obtain the edge $(a, b)$ gives an $M_5$ minor on $\set{u, v, w, a, b}$. \\

\textbf{Case 2.3.1.2}: If $(v, b), (a, b) \in E$, then $b = u$ because $v$'s out-degree is two. This gives an $N_4$ minor on $\set{u, v, w, a}$. \\

\textbf{Case 2.3.2}: $(a, v) \not \in E$ i.e., there exists a vertex $b \in V$ that blocks the edge $(v,a)$ i.e., $(b, v), (b, a) \in E$. Note that $b$ is distinct from $\{u, v, w, a\}$. Also $(v, b) \not \in E$ because $v$'s out-degree is two. So, there exists a vertex $c \in V$ that blocks the edge $(b,v)$ i.e., $(c,b), (v,c) \in E$ and $c$ is distinct from $\{u, v, w, b\}$. Also, if $c = a$, then we get an $M_5$ minor on $\{u,v,w,a,b\}$. So, we may assume that $c \neq a$. Now we have two cases based on whether there is an edge from $a$ to $b$.\\

\textbf{Case 2.3.2.1}: $(a, b) \in E$. Since $G$ is strongly connected, there is a path $P_{ac}$ from $a$ to $c$ such that $P_{ac}$ does not intersect $\set{u, v, w}$. If $b$ is not on the path $P_{ac}$ then we out-contract $P_{ac}$ backwards till $a$ is identified with $c$. Now we contract $(c, v)$ to obtain the edge $(a, v)$. We get an $M_5$ minor on $\set{u, w, v, a, b}$. If $b$ is on the path $P_{ac}$, let $P_{bc}$ be the subpath of $P_{ac}$. Note that $P_{bc}$ does not intersect $\set{u, v, w, a}$. We out-contract $P_{bc}$ backwards until we get the edge $(b, c)$. Now we out-contract $(a, b)$ to obtain the edge $(v, b)$. We get an $M_5$ on $\set{u, v, w, b, c}$. \\

\textbf{Case 2.3.2.2}: $(a, b) \not \in E$. Then there exists a vertex $d \in V$ that blocks the edge $(a, b)$ i.e., $(d, a), (d, b) \in E$. Note that $d \neq c$ because $c$ blocks $(b,v)$ and $d$ blocks $(b,a)$. Since $G$ is strongly connected there are paths $P_{ad}$ and $P_{ac}$. By arguments similar to Case 2.2.2,  we conclude that $P_{ad}$ does not intersect $\set{u, v, w, b, c}$ and $P_{ac}$ does not intersect $\set{u, v, w, b, d}$.  We out-contract the paths $P_{ad}$ and $P_{ac}$ backwards until we get the edges $(a,d)$ and $(a, c)$. Now we out-contract the edges $(c, v)$ and $(d, b)$ to obtain the edges $(a, b)$ and $(a, v)$. This results in an $M_5$ minor on $\set{u, v, w, a, b}$. \\

\textbf{Case 3}: $(w, u), (w, v) \in E$ and $(u, w), (v, w) \not \in E$. \\

Since $u$ and $v$ must have two out-neighbors, there exists $a, b \neq w$ such that $(v, a), (u, b) \in E$. If $a = b$, then by strong-connectivity of $G$, there exists a path $P_{aw}$ from $a$ to $w$ that does not intersect $u$ or $v$. Now we out-contract $P_{aw}$ backwards until $a$ and $w$ are identified to obtain edges $(u, w)$ and $(v, w)$. This gives a $K_3$ minor on $\{u,v,w\}$. So, we may assume $a \neq b$. 

By strong connectivity of $G$, there exist directed paths from $a$ to $w$ (say $P_{aw}$) and $b$ to $w$ (say $P_{bw}$). Now we claim that we can choose these paths such that either $P_{aw} \cap {\{u,v\}} = \emptyset$ or $P_{bw} \cap {\{u,v\}} = \emptyset$. If $P_{bw}$ contains any of $\{u, v\}$, then note that since $\set{u, v}$ only has out-neighbors $\set{a, b}$, we conclude that $P_{bw}$ must contain $a$. Then, the subpath of $P_{bw}$ from $a$ to $w$ (say $P_{aw}'$) must not intersect $\set{u, v}$. because $P_{aw}$ and $P_{bw}$ are simple paths. This gives us $P_{aw}'$, a simple path from $a$ to $w$ that does not intersect $\set{u, v}$. \\

\textbf{Case 3.1}: If there are paths $P_{aw}, P_{bw}$ such that they both do not intersect $\set{u, v}$, then let $x$ denote the first vertex along $P_{aw}$ that is also on $P_{bw}$. Note that $P_{ax} $ and $P_{bx}$ are now internally vertex-disjoint. Now, we out-contract $P_{ax}$ and $P_{bx}$ backwards until we identify $x$, $a$ and $b$. We also out-contract $P_{xw}$ backwards to identify $a$, $b$ and $w$, which gives a $K_3$ minor on $\set{u, v, w}$. \\

\textbf{Case 3.2}: Without loss of generality, we may assume that $P_{aw}$ does not intersect $\set{u, v}$ and all paths from $b$ to $w$ intersects either $u$ or $v$. Note that all paths starting from $b$ must first intersect $\set{u, v}$ before any vertex on the path $P_{aw}$, or else we can find a path from $b$ to $w$ that follows along $P_{aw}$ without intersecting $\set{u, v}$. \\

\textbf{Case 3.2.1}: There exists a path $P_{bu}$ from $b$ to $u$ such that $P_{bu}$ does not contain $v$. We apply the contraction steps from Case 2.2. These contractions do not affect $P_{aw}$. Now we out-contract $P_{aw}$ backwards until $a$ and $w$ are identified to obtain the edge $(u, w)$ resulting in an $N_4$ minor. \\

\textbf{Case 3.2.2}: There exists a path $P_{bv}$ from $b$ to $v$. We may assume that $P_{bv}$ does not contain $u$. We apply the contraction steps from Case 2.3. These contractions do not affect $P_{aw}$. Now we out-contract $P_{aw}$ backwards until $a$ and $w$ are identified to obtain the edge $(u, w)$ resulting in an $M_5$ minor. \\

\textbf{Case 4}: $(w, u), (w, v) \not \in E, (u, w), (v, w)  \in E$. We may assume that none of the cases above hold i.e., for every bidirected edge in $G$, we are in Case 4. We say that $(u, w)$ and $(v, w)$ are the ``blocker edges" of $w$ for the edge $e = (u,v)$. These blocker edges cannot be shared by two different bidirected edges because of the out-degree of $u$ and $v$ is two.

Hence, $\alpha \geq 2\beta$, where $\alpha$ is the number of directed edges and $\beta$ is the number of bidirected edges. As mentioned earlier, we have $\abs{V} \geq \alpha \geq 2\beta$ and $\alpha + 2\beta = 2\abs{V}$. This implies $\abs{V} = \alpha = 2\beta$. The equality $\abs{V} = \alpha$ implies that every vertex must block some directed edge. In particular, $w$ must block some directed edge $(a, b) \in E$ implying $(b, a) \notin E$. The equality of $\alpha = 2\beta$ forces all directed edges to be blocker edges of some bidirected edge. \\

\textbf{Case 4.1}: $(w, a) \in E$ and $(a, w) \not \in E$. Then, $(w, a)$ must be the blocker edge for some bidirected edge $e'$. This implies that $e'$ must be incident to $w$ and since $w$ has outdegree 2, we conclude that $e' = (w,b)$ and $(b, a) \in E$ which is a contradiction (recall that $(b, a) \notin E$). \\

\textbf{Case 4.2}: $(w, a), (a, w) \in E$. Since $G$ is strongly connected, there is a path from $b$ to $\set{u, v}$. Without loss of generality, there is a path from $b$ to $v$ such that $P_{bv}$ does not contain $u$. Also, $P_{bv}$ does not contain $\set{w, a}$. Now we out-contract $P_{bv}$ until $b$ and $v$ are identified to obtain the edges $(w, v), (a, v)$ resulting in an $N_4$ minor on $\set{u, v, w, a}$. \\

We conclude that no such counter-example $G$ exists and hence the theorem is true.
\end{proof}

\vspace{0.1in}

Now we are ready to prove our main theorem. Note that the directed elimination ordering (see \defref{definition:elimination-ordering}) defines an order of ``eliminating" the vertices of $G_i$ to obtain $G_{i+1}$. This is done by deleting a vertex (say $v$) and adding edges from all in-neighbors of $v$ to all out-neighbors of $v$, without introducing loops or multiple edges. \thref{thm:kw-charac} implies that a digraph $G$ has Kelly-width at most $k + 1$ if and only if $G$ can be reduced to the null digraph by repeatedly eliminating a vertex of out-degree at most $k$. The following observations are very crucial.

\begin{observation}
Let $G$ be a digraph and $u \in V(G)$ be a vertex of out-degree zero. Then, eliminating $u$ is equivalent to deleting $u$.
\end{observation}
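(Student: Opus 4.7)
The plan is to unfold the definition of directed elimination (Definition of elimination ordering) applied to the vertex $u$ and compare the result to simply deleting $u$. Recall that eliminating $u$ from $G$ produces a graph $G'$ with vertex set $V(G) \setminus \{u\}$ whose edge set consists of all edges of $G$ not incident with $u$, together with all new fill-in edges $(x,y)$ for which both $(x,u) \in E(G)$ and $(u,y) \in E(G)$ hold (and $x \neq y$). The deletion $G \setminus \{u\}$ has the same vertex set and keeps exactly the edges of $G$ not incident with $u$, with no new edges added.

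Hence the only potential difference between the two operations is the set of fill-in edges. The fill-in edges $(x,y)$ arise from pairs consisting of an in-edge $(x,u)$ and an out-edge $(u,y)$ of $u$ in $G$. Since $u$ has out-degree zero by hypothesis, the set $N_{out}(u) = \emptyset$, so no out-edge $(u,y)$ exists, and therefore no fill-in edge is introduced. Consequently the edge sets of the elimination result and of $G \setminus \{u\}$ coincide, proving that eliminating $u$ is equivalent to deleting $u$.

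There is no real obstacle here; the observation is essentially a direct unfolding of the definition, and the entire argument reduces to noting that an empty out-neighborhood forces the fill-in set to be empty. The only point worth being explicit about is that the condition $u \neq x$ in the fill-in rule is automatic when there are no out-edges at all, so no edge case needs separate treatment.
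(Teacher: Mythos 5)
Your proof is correct and is exactly the direct unfolding of \defref{definition:elimination-ordering} that the paper leaves implicit by labeling the statement an observation: with $N_{out}(u)=\emptyset$ no fill-in edges can arise, so elimination degenerates to deletion. The minor typo ``$u\neq x$'' (you mean $x\neq y$) is immaterial since the fill-in set is empty regardless.
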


\begin{observation}
Let $G$ be a digraph and $u \in V(G)$ be a vertex of out-degree one. Let $v$ be the out-neighbor of $u$. Then, eliminating $u$ is equivalent to out-contracting the edge $(u,v)$.
\end{observation}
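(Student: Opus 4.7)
The plan is to unfold both operations from their definitions and verify that they produce the same digraph. Both operations delete $u$, so the vertex sets agree as $V(G) \setminus \{u\}$; the substance of the proof lies in checking that the edge sets coincide.

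First I would apply \defref{definition:elimination-ordering}: eliminating $u$ removes $u$ together with all edges incident to it, and for every pair $(x, u), (u, y) \in E(G)$ with $x \neq y$ it adds the edge $(x, y)$ (multi-edges and self-loops are absorbed since the paper works with simple digraphs). Because $u$ has out-degree exactly one with unique out-neighbor $v$, the only admissible value of $y$ is $v$. Hence the new edges produced by elimination are exactly $\{(x, v) : x \in N_{in}(u) \setminus \{v\}\}$, and the only removed edges are those incident to $u$.

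Next I would apply \defref{definition:out-contract} to $e = (u, v)$: this deletes $u$, removes all edges of the form $(x, u)$ or $(u, x)$, and adds the set $\{(x, v) : (x, u) \in E(G)\}$. The added set contains the self-loop $(v, v)$ precisely when $v \in N_{in}(u)$, which is discarded under the simple-digraph convention, so the effectively added edges reduce to $\{(x, v) : x \in N_{in}(u) \setminus \{v\}\}$. Matching the two operations side-by-side, both remove exactly the edges incident to $u$ and add exactly the same set of new edges, so the resulting digraphs are identical.

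The only mild obstacle is bookkeeping: making the implicit conventions about self-loops and multi-edges explicit, and noting that out-contraction strips off the outgoing edge $(u, v)$ in addition to the incoming edges at $u$, which is redundant with the deletion of $u$ but parallels what elimination does. No deeper argument is required.
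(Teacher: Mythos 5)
The paper states this observation without proof, treating it as an immediate consequence of the definitions; your proposal simply supplies the routine verification that was omitted. Your unfolding is correct: elimination (per \defref{definition:elimination-ordering}) deletes $u$ and adds $(x,y)$ for every $x \in N_{in}(u)$, $y \in N_{out}(u)$ with $x \neq y$, and since $N_{out}(u) = \{v\}$ this is exactly $\{(x,v) : x \in N_{in}(u) \setminus \{v\}\}$; out-contraction (per \defref{definition:out-contract}) deletes $u$, strips all edges incident to $u$, and adds $\{(x,v) : (x,u) \in E(G)\}$, which collapses to the same set once the potential self-loop $(v,v)$ is discarded under the simple-digraph convention. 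One minor point worth making explicit, which you gesture at: out-contraction in general discards \emph{all} out-going edges of $u$, but because $u$ has out-degree exactly one the only such edge is $(u,v)$ itself, so no information is lost relative to elimination — this is precisely where the out-degree-one hypothesis is used and where the analogous claim would fail for higher out-degree. With that noted, the argument is complete and matches what the paper implicitly relies on.
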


\begin{observation}\label{obs-minor}
If $H$ is obtained from $G$ by eliminating a vertex of out-degree $\leq 1$ then $H \krel G$.
\end{observation}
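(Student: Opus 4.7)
The plan is to split on the out-degree of the eliminated vertex $u$ and invoke the two observations immediately preceding. Both cases are essentially a one-line appeal to Definition \ref{definition:directed-minor}, since the elimination step of out-degree $\leq 1$ is captured exactly by one of the listed directed minor operations.

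If $u$ has out-degree zero, the first observation above says the elimination step coincides with the plain vertex deletion $G \setminus \{u\}$; vertex deletion is one of the four operations listed in Definition \ref{definition:directed-minor}, so $H \krel G$. If instead $u$ has out-degree exactly one with unique out-neighbor $v$, the second observation says the elimination produces the same digraph as out-contracting the edge $(u,v)$: in both procedures the vertex $u$ is removed, every edge incident to $u$ is removed, and for every $x \in N_{in}(u) \setminus \{v\}$ the edge $(x,v)$ is inserted, with any resulting self-loops and multi-edges suppressed by convention (compare Definition \ref{definition:out-contract}). Since out-contraction is also listed among the directed minor operations in Definition \ref{definition:directed-minor}, we again conclude $H \krel G$.

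There is essentially no real obstacle: the observation is pure bookkeeping built on top of the two preceding observations together with the definition of $\krel$. The only point requiring a moment's care is checking, in the out-degree-one case, that the edge sets produced by the elimination step and by the out-contraction genuinely agree after discarding self-loops and multi-edges; this is immediate from comparing Definition \ref{definition:elimination-ordering} with Definition \ref{definition:out-contract}, which is why the authors state the result as an observation rather than a lemma.
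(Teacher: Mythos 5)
Your proposal is correct and matches exactly what the paper intends: the observation is meant to follow immediately from the two preceding observations (out-degree zero elimination equals vertex deletion; out-degree one elimination equals out-contraction), each of which is a listed operation in \defref{definition:directed-minor}. Your careful check that the elimination step and the out-contraction produce the same edge set after suppressing self-loops is the right small verification and is consistent with \defref{definition:elimination-ordering} and \defref{definition:out-contract}.
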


\begin{theorem} 
A digraph $G$ is a partial 1-DAG if and only if it contains no $K_3, N_4$ or $M_5$ as a directed minor.  
\end{theorem}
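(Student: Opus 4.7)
The plan is to prove the two directions separately, using Lemma \ref{lem:kw-monotone} for the (``only if'') direction and Theorem \ref{thm:out-two-minors} for the (``if'') direction, with the three observations and Theorem \ref{thm:kw-charac} doing the bridging.

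For the forward direction, suppose $G$ is a partial 1-DAG and, for contradiction, that $H \in \{K_3, N_4, M_5\}$ is a directed minor of $G$. By \lemref{lem:kw-monotone}, $kw(H) \leq kw(G) \leq 2$, so $H$ would itself be a partial 1-DAG. I would then rule this out for each of the three graphs by inspection of \figref{fig:kw-minors}: each of $K_3$, $N_4$, $M_5$ is drawn so that every vertex has out-degree at least $2$. By \thref{thm:kw-charac}, being a partial 1-DAG requires a directed elimination ordering of width $\leq 1$, meaning the first vertex eliminated has out-degree at most $1$ in the original digraph. Since no vertex of $K_3$, $N_4$, or $M_5$ qualifies, none of them can be partial 1-DAGs, contradicting $kw(H) \leq 2$.

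For the backward direction, I would proceed by induction on $|V(G)|$. Suppose $G$ contains none of $K_3, N_4, M_5$ as a directed minor. If $V(G) = \emptyset$, we are done. Otherwise, \thref{thm:out-two-minors} guarantees that some vertex $u \in V(G)$ must have out-degree at most $1$ (else $G$ itself would contain one of the three as a directed minor). Eliminate $u$ to obtain $G'$; by Observations 1 and 2, this elimination coincides either with vertex deletion or with out-contracting the unique out-edge at $u$, and by \obsref{obs-minor} we have $G' \krel G$. Applying \lemref{lem:kw-monotone} to the contrapositive (or simply noting that the directed-minor relation is transitive), $G'$ also avoids $K_3, N_4, M_5$ as directed minors. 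By induction, $G'$ admits a directed elimination ordering of width $\leq 1$; prepending $u$ gives such an ordering for $G$. Therefore $G$ is a partial 1-DAG by \thref{thm:kw-charac}.

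The heavy lifting has already been done in \thref{thm:out-two-minors}, so the remaining work here is essentially bookkeeping: verifying carefully that eliminating a vertex of out-degree $\leq 1$ produces a directed minor (handled by Observations 1--3) and that the induction hypothesis transfers. The only step that requires a moment of care is the forward direction, where one must confirm that $K_3$, $N_4$, and $M_5$ themselves have $kw > 2$; I expect this to be the mildest obstacle and resolvable simply by pointing to the out-degree of every vertex in \figref{fig:kw-minors} combined with \thref{thm:kw-charac}.
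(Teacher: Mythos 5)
Your proposal is correct and follows the same route as the paper: both directions hinge on Lemma~\ref{lem:kw-monotone}, Theorem~\ref{thm:out-two-minors}, and the observations that eliminating a vertex of out-degree $\leq 1$ yields a directed minor. The only cosmetic difference is that you phrase the backward direction as an induction on $|V(G)|$ (eliminating one low-out-degree vertex at a time via the contrapositive of Theorem~\ref{thm:out-two-minors}) whereas the paper eliminates all such vertices in a single pass to reach a graph $H$ of minimum out-degree $\geq 2$ and then invokes the theorem once; these are the same argument, and your slightly more explicit check that $K_3, N_4, M_5$ have all out-degrees $\geq 2$ is a fine way to justify the paper's ``easy to see'' that they are not partial 1-DAGs.
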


\begin{proof}
Using the directed elimination ordering, it is easy to see that $K_3, N_4$ and $M_5$ are partial 2-DAGs (but not partial 1-DAGs) and all their proper directed minors are partial 1-DAGs. Recall that \lemref{lem:kw-monotone} states that partial $k$-DAGs are closed under $\krel$. Hence partial 1-DAGs cannot contain $K_3, N_4$ or $M_5$ minors.

To prove the other direction, let $G$ be a partial $k$-DAG, for some $k \geq 2$, that does not contain $K_3, N_4$ or $M_5$ minors. Now we repeatedly eliminate vertices of out-degree $\leq 1$ until there are no such vertices. Let $H$ be the resulting graph. By \obsref{obs-minor}, $H \krel G$. If $H$ is the null digraph, then $G$ is a partial 1-DAG, a contradiction. So, $H$ is a non-null digraph in which all vertices have out-degree $\geq 2$. By \thref{thm:out-two-minors}, $H$ must contain a $K_3, N_4$ or $M_5$ as a minor, a contradiction.
\end{proof}

\section{Conclusion and Open Problems}

In this paper, we proved that partial 1-DAGs are characterized by three forbidden directed minors, $K_3, N_4$ and $M_5$. Our result generalizes the forbidden minor characterization of partial 1-trees. As mentioned in the introduction, the complete sets of forbidden minors are also known for graphs of treewidth at most two and three. An interesting open problem is to show such forbidden directed minor characterizations of partial 2-DAGs and partial 3-DAGs by extending the techniques used in the current paper. Graphs of pathwidth at most one are characterized by two forbidden minors \cite{2-minors-pathwidth}. In a sequel to this paper, we exhibit the complete set of forbidden directed minors of digraphs with directed pathwidth at most one \cite{kintali-zhang-dpw-minors}. As mentioned earlier, a suitable {\em directed graph minor theorem}, implies that for all $k \geq 0$, digraphs of Kelly-width (or DAG-width, or directed pathwidth) $\leq k$ are characterized by a finite set of forbidden directed minors (see \cite{kintali-minors1} for more details).

\bibliographystyle{alpha}
\bibliography{../../bib-kintali,../../bib-twbook}

\end{document}